\newcommand{\N}{\mathbb{N}}
\newcommand{\Z}{\mathbb{Z}}
\newcommand{\R}{\mathbb{R}}
\newcommand{\C}{\mathbb{C}}
\tikzset{join/.code=\tikzset{after node path={%
\ifx\tikzchainprevious\pgfutil@empty\else(\tikzchainprevious)%
edge[every join]#1(\tikzchaincurrent)\fi}}}
\tikzset{>=stealth',every on chain/.append style={join},
         every join/.style={->}}
\tikzstyle{labeled}=[execute at begin node=$\scriptstyle,
\newtheorem{theorem}{Theorem}[section]
\newtheorem{proposition}[theorem]{Proposition}
\newtheorem{lemma}[theorem]{Lemma}
\newtheorem{corollary}[theorem]{Corollary}
\theoremstyle{definition}
\theoremstyle{remark}
\newtheorem{remark}[theorem]{Remark}
\numberwithin{equation}{section}
\begin{document}
\title[Vector-valued Eidelheit sequences]{Vector-valued Eidelheit sequences and the non B-completeness of tensor products}

\author[A. Debrouwere]{Andreas Debrouwere}
\address{Department of Mathematics, Ghent University, Krijgslaan 281, 9000 Gent, Belgium}
\thanks{A. Debrouwere gratefully acknowledges support by Ghent University, through a BOF Ph.D.-grant.}
\email{Andreas.Debrouwere@UGent.be}

\author[J. Vindas]{Jasson Vindas}
\thanks{J. Vindas gratefully acknowledges support by Ghent University, through the BOF-grant 01N01014.}
\address{Department of Mathematics, Ghent University, Krijgslaan 281, 9000 Gent, Belgium}
\email{Jasson.Vindas@UGent.be}

\subjclass[2010]{46A03, 46A30, 46A32.}
\keywords{Eidelheit sequences; $B$-complete spaces; tensor products}
\begin{abstract}
We introduce vector-valued Eidelheit sequences and obtain a characterization that generalizes Eidelheit's classical theorem \cite{Eidelheit}. As an application, we discuss criteria for non $B$-completeness of completed tensor products.
\end{abstract}
\maketitle
\section{Introduction}
In 1936 Eidelheit \cite{Eidelheit} (see also \cite[Thm.\ 26.27]{M-V}) characterized the sequences $(x'_n)_{n \in \N}$ of continuous linear functionals on a Fr\'echet space $E$ such that for each arbitrary sequence $(a_n)_{n \in \N}$ of complex numbers the infinite system of linear equations
\begin{equation}
\langle x'_n,x\rangle = a_n, \qquad n \in \N,
\label{scalar-problem}
\end{equation}
has solutions $x \in E$. In his memory such sequences are nowadays called \emph{Eidelheit sequences}. Later on, this notion has been extensively studied by Mityagin \cite{Mityagin} and Vogt \cite{Vogt-86, Vogt-89, Vogt-95}.

In this article we shall study the corresponding \emph{vector-valued} Eidelheit problem. Our results reveal an interesting connection between Eidelheit sequences and Pt\'ak's notion of $B$-completeness. Let $E$ be a Hausdorff locally convex space. For each $n\in\mathbb{N}$ consider a continuous linear mapping $T_n$ from $E$ into a Hausdorff locally convex space $F_n$. We write $T_n(x)=\langle T_n, x\rangle$. Suppose that $E$ is $B$-complete and each $F_n$ is barreled. In our main theorem we shall give necessary and sufficient conditions over the linear mappings $(T_n)_{n \in \N}$  such that for any arbitrary choice of $y_n\in F_n $, $n\in\mathbb{N}$, the infinite system of linear equations
\begin{equation}
\langle T_n,x \rangle  = y_n, \qquad n \in \N,
\label{abstractmomentproblem}
\end{equation}
admits solutions $x \in E$. In the scalar-valued case $(F = \C)$ our conditions are equivalent to those in Eidelheit's theorem. Hence, we extend here his characterization to $B$-complete spaces.

As a rather surprising application, we shall also provide criteria on a Fr\'echet space and a reflexive $(DF)$-space which ensure that their completed tensor product (with respect to the $\varepsilon$-, projective, biequicontinuous, or inductive topology) is \emph{not} $B$-complete. The latter is partly based on the work of Valdivia \cite{Valdivia-87}. As a corollary, we show that the space $\mathcal{O}_C(\R^d)$ of very slowly increasing functions \cite{Horvath} is not $B$-complete.
\section{Preliminaries}
Throughout this article every locally convex space (from now on abbreviated as l.c.s.) is assumed to be Hausdorff. 
 Whenever we want to specify a topology on its topological dual $E'$ we will employ the subscript $\sigma$ for the weak$^*$ topology, $\tau$ for the Mackey dual topology, or $b$ for the strong topology. The polar in $E'$ of a set $U \subseteq E$ is denoted by $U^\circ$.  
For a l.c.s.\ $F$, we write $L(E, F)$ for  the space of all continuous linear mappings from $E$ into $F$. As in the introduction, we often write $T(x)=\langle T, x\rangle$, where $T\in L(E,F)$. Cartesian products and direct sums of l.c.s.\ are topologized in the standard fashion \cite{Kothe}.

One of the central notions in this article is the one of $B$-completeness \cite[Chap.\ 3, Sect.\ 17]{Horvath}, \cite[Chap.\ 7]{Bonet}. A l.c.s.\ $E$ is said to be $B$-complete (or a Pt\'ak space) if a linear subspace $X \subseteq E'$ is closed in $E'_\sigma$ if and only if $X \cap A$ is closed in $A$  (endowed with the relative topology of $E'_\sigma$) for every equicontinuous set $A \subset E'$. Every Fr\'echet space is $B$-complete by the Krein-$\check{S}$mulian theorem \cite[Chap.\ 3, Thm.\ 10.2]{Horvath}. Moreover, every reflexive $(DF)$-space is $B$-complete \cite[Chap.\ 3, Prop.\ 17.6]{Horvath}. 

Concerning topological tensor products we shall use the following terminology \cite{Bonet, Grothendieck, Treves}. Let $E$ and $F$ be locally convex spaces. The tensor product $E \otimes F$ is canonically isomorphic to the space $B(E'_\sigma, F'_\sigma)$ of jointly continuous bilinear functionals on $E'_\sigma \times F'_\sigma$ via 
$$
(x \otimes y)(x', y') = \langle x', x\rangle \langle y', y\rangle, \qquad x' \in E',\ y' \in F'.
$$
The $\varepsilon$-topology on $E \otimes F$ is defined as the topology carried over from $B(E'_\sigma, F'_\sigma)$ endowed with the topology of uniform convergence on the products of equicontinuous subsets of $E'$ and $F'$. The $\pi$- (resp., $\beta$-, $i$-) topology on $E \otimes F$ is the finest locally convex topology on this vector space for which the canonical bilinear mapping $E \times F \rightarrow E \otimes F$ is jointly (resp., hypo-, separately) continuous. 
 Let $t = \varepsilon, \pi, \beta$ or $i$. Endowed with the $t$-topology, the space $E \otimes F$ will be denoted by $E \otimes_t F$ and its completion by  $E \widehat{\otimes}_{t} F$. Let $G$ and $H$ be l.c.s.\ and let $T: E \rightarrow G$, $S: F \rightarrow H$ be continuous linear mappings. The associated tensor mapping $T \otimes S: E \otimes_t F \rightarrow G \otimes_t H$ is continuous. We write $S \widehat{\otimes}_t  T$ for the extension of $S \otimes T$ as a continuous linear mapping from $E \widehat{\otimes}_t  F$ into $G \widehat{\otimes}_t  H$. 

\section{Vector-valued Eidelheit sequences}
Let $E$  be a locally convex space. For each $n\in\mathbb{N}$, we consider $T_n\in L(E, F_n)$, where $F_n$ is a locally convex space. We seek conditions over $(T_n)_n$ such that for each $(y_n)_n\in \prod_{n\in\mathbb{N}} F_n$ the infinite system of equations (\ref{abstractmomentproblem}) admits solutions $x \in E$. If this is the case, we call $(T_n)_n$ an \emph{Eidelheit sequence} (or, in short, \emph{Eidelheit}). To the system of equations \eqref{abstractmomentproblem} we associate the mapping
$$
\Lambda: E \rightarrow \prod_{n\in\mathbb{N}} F_n:\ x \rightarrow (\langle T_n,x \rangle)_n.
$$
Naturally, $(T_n)_n$ is Eidelheit if and only if $\Lambda$ is surjective.

\begin{theorem}\label{vector-valued}
Let $E$ be a l.c.s., $(F_{n})_{n}$ be a sequence of l.c.s., and $(T_n)_n \in \prod_{n\in\mathbb{N}}  L(E,F_n)$. If  $(T_n)_n$ is Eidelheit, then the following three properties hold:
\begin{itemize}
\item[\textbf{(P1)}] For every $N \in \N$ and $y'_0\in F'_0, \ldots, y'_N \in F'_N$
$$
\sum_{n = 0}^N y'_n \circ T_n = 0
$$
implies $y'_0 = \ldots = y'_N = 0$.
\item[\textbf{(P2)}] For every equicontinuous set $A \subset E'$ there is $\nu \in \N$ such that for every $N > \nu$ and $y'_0\in F'_{0},\ldots, y'_N \in F'_{N}$
$$
\sum_{n = 0}^N y'_n \circ T_n \in A
$$
implies $y'_{\nu +1} = \ldots = y'_N = 0$.
\item[\textbf{(P3)}] For every $N \in \N$ the mapping
$$
E \rightarrow \prod_{n=0}^{N} F_n: x \rightarrow ( \langle T_0,x \rangle, \ldots ,  \langle T_N,x \rangle)
$$
is surjective.
\end{itemize}
Conversely, suppose that $E$ is $B$-complete and each $F_n$ is barreled. If $(T_n)_n$ satisfies \textnormal{\textbf{(P1)}}, \textnormal{\textbf{(P2)}}, and \textnormal{\textbf{(P3)}},
then $(T_n)_n$ is Eidelheit. 
\end{theorem}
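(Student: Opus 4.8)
The necessity of \textbf{(P1)}--\textbf{(P3)} is the easy direction. Assuming $\Lambda$ surjective: \textbf{(P3)} holds because the map in question is $\Lambda$ followed by the coordinate projection $\prod_n F_n\to\prod_{n=0}^N F_n$, a composition of surjections. For \textbf{(P1)}, if $\sum_{n=0}^N y'_n\circ T_n=0$ while $y'_k\neq 0$, I would pick $z\in F_k$ with $\langle y'_k,z\rangle\neq 0$ and, by surjectivity, $x\in E$ with $\langle T_k,x\rangle=z$ and $\langle T_n,x\rangle=0$ for the remaining $n\le N$; evaluating the vanishing sum at $x$ yields $\langle y'_k,z\rangle=0$, a contradiction. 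For \textbf{(P2)}, take $A\subseteq U^\circ$ with $U$ an absolutely convex $0$-neighbourhood in $E$; identifying $(\prod_n F_n)'$ with $\bigoplus_n F'_n$ and writing $\Lambda^t\colon(y'_n)_n\mapsto\sum_n y'_n\circ T_n$, one has $(\Lambda^t)^{-1}(U^\circ)=(\Lambda(U))^\circ$, the polar taken in $\bigoplus_n F'_n$. Since $\Lambda$ is surjective, $\Lambda(U)$ is absorbing in $\prod_n F_n$, so this polar is $\sigma(\bigoplus_n F'_n,\prod_n F_n)$-bounded; and a bounded subset of $\bigoplus_n F'_n$ for this pairing lies in some finite sub-sum $\bigoplus_{n\le\nu}F'_n$ (a diagonal argument: otherwise one recursively picks elements with growing supports and a vector $(z_n)_n\in\prod_n F_n$ on which the pairing is unbounded). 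This is exactly \textbf{(P2)}.

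For the converse, put $G:=\prod_n F_n$, so $G'=\bigoplus_n F'_n$ and $\Lambda\colon E\to G$ is continuous; I will use only \textbf{(P2)} and \textbf{(P3)} (note \textbf{(P1)} is in any case a consequence of \textbf{(P3)}). The plan is to show (a) $\Lambda(E)$ is dense in $G$ and (b) $\Lambda$ is almost open; granting these, Pt\'ak's homomorphism theorem \cite[Chap.\ 3, Sect.\ 17]{Horvath}, \cite[Chap.\ 7]{Bonet} applies because $E$ is $B$-complete, giving that $\Lambda$ is a topological homomorphism onto the \emph{closed} subspace $\Lambda(E)$; being both dense and closed, $\Lambda(E)=G$, i.e.\ $(T_n)_n$ is Eidelheit. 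Step (a) is immediate from \textbf{(P3)}: every projection of $\Lambda(E)$ onto a finite sub-product is onto, so $\Lambda(E)$ meets every basic neighbourhood of $G$.

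Step (b) carries the weight. Fix an absolutely convex $0$-neighbourhood $U$ in $E$. Since $\overline{\Lambda(U)}=(\Lambda(U))^{\circ\circ}$ is a $0$-neighbourhood in $G$ exactly when $(\Lambda(U))^\circ=(\Lambda^t)^{-1}(U^\circ)$ is equicontinuous in $G'$, it suffices to establish the latter. Feeding the equicontinuous set $U^\circ$ into \textbf{(P2)} produces $\nu$ with $(\Lambda^t)^{-1}(U^\circ)\subseteq\bigoplus_{n=0}^\nu F'_n=H'$, where $H:=\prod_{n=0}^\nu F_n$ is a \emph{finite} product, hence barrelled. Therefore it is enough to see that $S:=(\Lambda^t)^{-1}(U^\circ)$ is $\sigma(H',H)$-bounded: barrelledness of $H$ then makes $S$ equicontinuous in $H'$, and the transpose of the projection $G\to H$ transports this to equicontinuity in $G'$. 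For the boundedness, fix $(z_0,\dots,z_\nu)\in H$; by \textbf{(P3)} choose $x\in E$ with $\langle T_n,x\rangle=z_n$ for $n\le\nu$, and $t>0$ with $x\in tU$. Then for any $(y'_0,\dots,y'_\nu)\in S$ one has $\sum_{n=0}^\nu y'_n\circ T_n\in U^\circ$, whence $\bigl|\sum_{n=0}^\nu\langle y'_n,z_n\rangle\bigr|=\bigl|\langle\sum_{n=0}^\nu y'_n\circ T_n,\,x\rangle\bigr|\le t$, a bound depending only on $(z_n)_n$. This finishes (b).

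The main obstacle, as I see it, is packaging step (b) correctly: \textbf{(P2)} must be invoked on $U^\circ$ to collapse the direct-sum polar $(\Lambda^t)^{-1}(U^\circ)$ into a finite sub-sum, and only then does the hypothesis ``each $F_n$ barrelled'' bite, via the barrelledness of the \emph{finite} product $H$, turning a pointwise bound into equicontinuity; here \textbf{(P3)} is essential to realize an arbitrary tuple $(z_0,\dots,z_\nu)$ as $(\langle T_n,x\rangle)_{n\le\nu}$, and the absorbency of $U$ is what keeps the bound uniform in $(y'_n)$. The closing move, from ``almost open'' to ``topological homomorphism with closed range'', is precisely what the $B$-completeness of $E$ buys through Pt\'ak's theorem and requires no further hypotheses.
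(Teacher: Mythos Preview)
Your proof is correct, and for the converse direction you take a different route than the paper. Both arguments first establish density of $\Lambda(E)$ (you from \textbf{(P3)}, the paper from \textbf{(P1)} via Hahn--Banach) and then show $\Lambda$ is a topological homomorphism with closed image. The paper does this by verifying the transpose criterion \cite[Lemma 37.7]{Treves}: it checks that $\Lambda^t\bigl(\bigoplus_n F'_n\bigr)$ is weak$^*$-closed (using $B$-completeness of $E$ and, after reducing via \textbf{(P2)} to a finite index $\nu$, applying Pt\'ak's open mapping theorem to the \emph{finite} truncation $\Lambda_\nu$ to see that $\Lambda_\nu^t$ has closed range), and then checks the equicontinuous-lifting condition similarly. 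You instead show directly that $\Lambda$ is \emph{almost open}: the polar identity $(\Lambda^t)^{-1}(U^\circ)=(\Lambda(U))^\circ$, the collapse via \textbf{(P2)} into a finite sub-sum $H'$, and the pointwise bound coming from \textbf{(P3)} give $\sigma(H',H)$-boundedness, which barrelledness of the finite product $H$ upgrades to equicontinuity; then a single application of Pt\'ak's homomorphism theorem to $\Lambda$ itself finishes. Your organization is more economical---one invocation of Pt\'ak rather than one for each equicontinuous set---and it makes transparent that \textbf{(P1)} is redundant in the converse (it follows from \textbf{(P3)}), a point the paper does not note. The paper's route, on the other hand, exhibits more explicitly the structure of $\operatorname{im}\Lambda^t$ in $E'_\sigma$, which is sometimes useful in its own right.
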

\begin{proof}
The transpose of $\Lambda$ is given by 
$$
\Lambda^t: =  \bigoplus_{n\in\mathbb{N}} F'_{n} \rightarrow E':   \ (y'_0, \ldots, y'_N) \rightarrow \sum_{n = 0}^N y'_n \circ T_n = \sum_{n=0}^{N} T^{t}_{n}(y'_n).
$$

Suppose that  $(T_n)_n$ is Eidelheit. The property \textbf{(P3)} trivially holds. Since $\Lambda$ is continuous and surjective, $\Lambda^t$ is a weak$^*$-topological isomorphism onto its image. 
In particular, $\textbf{(P1)}$ holds. Next, we actually show that a stronger property than \textbf{(P3)} holds, namely, that it is satisfied not only for equicontinuous sets but also for any bounded subset of $E'_{\sigma}$. So, let $A$ be a bounded set in $E'_{\sigma}$.  The set $B = (\Lambda^t)^{-1}(A)$ is weakly$^*$ bounded, and thus bounded in the Mackey dual topology, as follows from Mackey's theorem \cite[p.\ 254]{Kothe}. We have that $(\prod_{n\in\mathbb{N}} F_{n})'_{\tau}=\bigoplus_{n\in\mathbb{N}} (F_{n})'_{\tau}$  \cite[p.\ 286]{Kothe}. Hence $B \subset 
\bigoplus_{n  = 0}^\mu F'_n$ for some $\mu \in \N$ \cite[p.\ 213]{Kothe}.

Conversely, suppose that $E$ is $B$-complete, each $F_n$ is barreled, and that $(T_n)_n$ satisfies \textbf{(P1)}, \textbf{(P2)}, and \textbf{(P3)}. From \textbf{(P1)} and the Hahn-Banach theorem we conclude that $\Lambda(E)$ is dense in $\prod_{n\in\mathbb{N}}F_n$ and therefore we only need to show that $\Lambda(E)$ is closed in $\prod_{n\in\mathbb{N}}F_n$. Since the quotient of a $B$-complete space with a closed subspace is again $B$-complete \cite[Chap.\ 3, Prop.\ 17.2]{Horvath} and $B$-complete spaces are always complete \cite[Prop.\ 17.3(b)]{Horvath}, it suffices to show that $\Lambda$ is a topological homomorphism. Hence, in view of \cite[Lemma 37.7]{Treves}, we need to show that $(i)$ $\Lambda^t(\bigoplus_{n\in\mathbb{N}} F'_{n})$ is closed in $E'_\sigma$ and that $(ii)$ for every equicontinuous set $A \subset E'$ there is an equicontinuous set $B \subset \bigoplus_{n\in\mathbb{N}} F'_{n}$ such that $\Lambda^t(\bigoplus_{n\in\mathbb{N}} F'_{n}) \cap A \subseteq \Lambda^t(B)$.  We first show $(i)$. Since $E$ is $B$-complete, we have to show that $\Lambda^t(\bigoplus_{n\in\mathbb{N}} F'_{n}) \cap A$ is closed in $A$ for every equicontinuous set $A \subset E'$. By \textbf{(P2)} there is $\nu \in \N$ such that $\Lambda^t(\bigoplus_{n\in\mathbb{N}} F'_{n}) \cap A =  \Lambda_\nu^t\left(\oplus_{n  = 0}^\nu F'_n\right) \cap A$, where 
$$
\Lambda_\nu: E \rightarrow \prod_{n=0}^\nu F_n:\  x \rightarrow ( \langle T_0,x \rangle, \dots ,   \langle T_\nu,x \rangle).
$$
The property \textbf{(P3)} says that the mapping $\Lambda_\nu$ is surjective, the Pt\'ak open mapping theorem \cite[Chap.\ 3, Prop.\ 17.2]{Horvath}  then implies that 
$\Lambda_\nu$ is a topological homomorphism. Consequently,  $\Lambda_\nu^t\left(\bigoplus_{n  = 0}^\nu F'_n\right)$  is closed in $E'_\sigma$, which yields the assertion.
Finally, let us verify $(ii)$. Fix an equicontinuous subset $A$ of $E'$. By \textbf{(P2)} there is $\nu \in \N$ such that $\Lambda^t(\bigoplus_{n \in\mathbb{N}} F'_n) \cap A =  \Lambda_\nu^t\left(\bigoplus_{n  = 0}^\nu F'_n\right) \cap A$. Since $\Lambda_\nu$ is a topological homomorphism, there is an equicontinuous subset $B$ of $\bigoplus_{n  = 0}^\nu F'_n$, and thus a fortiori of $\bigoplus_{n \in\mathbb{N}} F'_n$, such that $\Lambda_\nu^t\left(\bigoplus_{n  = 0}^\nu F'_n\right) \cap A \subseteq \Lambda_\nu^t(B) = \Lambda^t(B)$.
\end{proof}
Setting $F_n = \C$, $n \in \N$, in Theorem \ref{vector-valued} we obtain the following criterion for scalar-valued Eidelheit sequences:
\begin{corollary}\label{scalar-valued}
Let $E$ be a l.c.s.\ and let $(x'_n)_n \subset E'$. If $(x'_n)_n$ is Eidelheit, then:
\begin{itemize}
\item[(P1)] The set of linear functionals $\{x'_n \, : \, n \in \N\}$ is linearly independent. 
\item[(P2)] For every equicontinuous set $A \subset E'$ the set
$$
\operatorname{span} \{x'_n \, : \, n \in \N\} \cap A
$$
is contained in a finite-dimensional subspace.
\end{itemize}
Conversely, if $E$ is $B$-complete, the properties \textnormal{(P1)} and \textnormal{(P2)} are also sufficient for $(x'_n)_n$ to be an Eidelheit sequence.
\end{corollary}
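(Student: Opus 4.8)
The plan is to obtain Corollary~\ref{scalar-valued} as the special case $F_n = \C$ (for every $n$) of Theorem~\ref{vector-valued}, checking that the three conditions there reduce, in this case, to the two conditions stated here. First I would record the routine identifications: each $F_n = \C$ is barreled, $F'_n = \C$, and, writing $T_n = x'_n \in L(E,\C) = E'$, the transpose of $\Lambda$ becomes the map $(y'_0,\dots,y'_N) \mapsto \sum_{n=0}^N y'_n x'_n$ on $\bigoplus_{n\in\N}\C$. Hence condition \textbf{(P1)} of Theorem~\ref{vector-valued} reads ``$\sum_{n=0}^N y'_n x'_n = 0$ implies $y'_0 = \dots = y'_N = 0$'', which is precisely the linear independence of $\{x'_n : n \in \N\}$, that is, (P1) of the corollary.

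Next I would dispose of \textbf{(P3)}. In the scalar case it asserts that for each $N$ the mapping $x \mapsto (\langle x'_0,x\rangle,\dots,\langle x'_N,x\rangle)$ from $E$ into $\C^{N+1}$ is onto; by a standard linear-algebra fact this holds exactly when $x'_0,\dots,x'_N$ are linearly independent (one direction: a nontrivial linear relation forces the image into a proper hyperplane; the other: linearly independent functionals admit a dual system $e_0,\dots,e_N \in E$ with $\langle x'_i,e_j\rangle=\delta_{ij}$). Thus \textbf{(P3)} is automatically implied by (P1) and may be omitted from the statement.

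The only point requiring an actual argument is that, under linear independence, \textbf{(P2)} of Theorem~\ref{vector-valued} is equivalent to (P2) of the corollary. Fix an equicontinuous set $A\subset E'$. If \textbf{(P2)} holds with index $\nu$, then any element of $\operatorname{span}\{x'_n : n \in \N\}\cap A$, written as $\sum_{n=0}^N y'_n x'_n$, has $y'_{\nu+1}=\dots=y'_N=0$ whenever $N>\nu$, so $\operatorname{span}\{x'_n : n \in \N\}\cap A \subseteq \operatorname{span}\{x'_0,\dots,x'_\nu\}$, a finite-dimensional space. Conversely, if $\operatorname{span}\{x'_n : n \in \N\}\cap A$ lies in a finite-dimensional subspace $V$, then $W := V\cap\operatorname{span}\{x'_n : n \in \N\}$ is finite-dimensional, and since $(x'_n)_n$ is a Hamel basis of its span, $W \subseteq \operatorname{span}\{x'_0,\dots,x'_\nu\}$ for some $\nu$; linear independence then yields \textbf{(P2)}. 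The main (mild) obstacle is exactly this bookkeeping: choosing $\nu$ large enough to absorb a finite-dimensional subspace of $\operatorname{span}\{x'_n : n \in \N\}$.

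Finally I would assemble the two implications. If $(x'_n)_n$ is Eidelheit, the forward part of Theorem~\ref{vector-valued} yields \textbf{(P1)}, \textbf{(P2)}, \textbf{(P3)}, and the translations above turn these into (P1) and (P2). Conversely, if $E$ is $B$-complete and (P1), (P2) hold, then, each $F_n=\C$ being barreled, the conditions \textbf{(P1)} (linear independence), \textbf{(P2)}, and \textbf{(P3)} (automatic from (P1)) are all satisfied, so the converse part of Theorem~\ref{vector-valued} shows that $(x'_n)_n$ is Eidelheit.
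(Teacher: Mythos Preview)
Your proposal is correct and follows essentially the same route as the paper: specialize Theorem~\ref{vector-valued} to $F_n=\C$ and observe that \textbf{(P3)} is redundant once (P1) holds. The paper's proof is a single sentence invoking Hahn--Banach for this redundancy, whereas you justify it by the purely algebraic dual-system argument and also spell out the (P2)$\Leftrightarrow$\textbf{(P2)} translation that the paper leaves implicit; both are fine.
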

\begin{proof} Indeed, property $\textbf{(P3)}$ is superfluous for $F_n = \C$, $n \in \N$, since it is always implied by (P1) and the Hahn-Banach theorem.
\end{proof}
When $E$ is a Fr\'echet space, we recover Eidelheit's original theorem \cite{Eidelheit}.
\section{Associated vector-valued Eidelheit sequences}\label{associated}
 We can canonically associate a vector-valued system of equations to each scalar-valued system of equations \eqref{scalar-problem} in the following way:
Let $E$ and $F$ be l.c.s.\ and suppose that $F$ is complete. For a given sequence $(x'_n)_n \subset E'$ we call $(x'_n \widehat{\otimes}_t \operatorname*{Id}_F)_n \subset L(E \widehat{\otimes}_t F, F)$, with $t = \varepsilon, \pi, \beta$, or $i$, the \emph{$F$-associated sequence of $(x'_n)$} (with respect to the $t$-topology). 

In this section we discuss whether the $F$-associated sequence of an Eidelheit sequence is itself Eidelheit. We start by giving sufficient conditions on $E$ and $F$ such that the $F$-associated sequence of any Eidelheit sequence in $E'$ is again Eidelheit.

\begin{proposition}\label{ass-vector-valued} Let $E$ and $F$ be locally convex spaces.
\begin{itemize}
\item[$(i)$ ]Let $t = \varepsilon$ or $\pi$. If $E$ is $B$-complete, $F$ is barreled and complete, and $E \widehat{\otimes}_t F$ is $B$-complete, then for any Eidelheit sequence $(x'_n)_n \subset E'$ its $F$-associated sequence,  with respect to the $t$-topology, is Eidelheit.
\item[$(ii)$] If  $E$ is $B$-complete, $F$ is barreled complete and contains a bounded total set, and $E \widehat{\otimes}_\beta F$ is $B$-complete, then for any Eidelheit sequence $(x'_n)_n \subset E'$ its $F$-associated sequence, with respect to the $\beta$-topology, is Eidelheit.
\end{itemize} 
\end{proposition}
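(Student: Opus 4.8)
The plan is to reduce both parts to the converse implication of Theorem \ref{vector-valued}. Put $T_n := x'_n\widehat{\otimes}_t\operatorname{Id}_F\in L(E\widehat{\otimes}_t F, F)$; this map is well defined because $F$ is complete, so $x'_n\otimes\operatorname{Id}_F\colon E\otimes_t F\to\C\otimes_t F = F$ extends to $E\widehat{\otimes}_t F$. Since $E\widehat{\otimes}_t F$ is $B$-complete and each $F_n := F$ is barreled, Theorem \ref{vector-valued} reduces the problem to verifying \textbf{(P1)}, \textbf{(P2)}, and \textbf{(P3)} for $(T_n)_n$. We identify $(E\widehat{\otimes}_t F)'$ with a space of bilinear forms on $E\times F$; a direct computation on $E\otimes F$ followed by a density argument shows that $T_n^t(y') = x'_n\otimes y'$, i.e.\ the transpose sends $y'\in F'$ to the form $(x,y)\mapsto\langle x'_n,x\rangle\langle y',y\rangle$, so that $y'_n\circ T_n = x'_n\otimes y'_n$. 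Finally, applying the direct implication of Theorem \ref{vector-valued} to the scalar Eidelheit sequence $(x'_n)_n$, we may use that the $x'_n$ are linearly independent, that every $\phi_N\colon E\to\C^{N+1}$, $x\mapsto(\langle x'_0,x\rangle,\dots,\langle x'_N,x\rangle)$, is surjective, and that the scalar version of \textbf{(P2)} holds.

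Properties \textbf{(P1)} and \textbf{(P3)} are straightforward. For \textbf{(P1)}: if $\sum_{n=0}^N x'_n\otimes y'_n = 0$, then for each fixed $y\in F$ we get $\sum_{n=0}^N\langle y'_n,y\rangle x'_n = 0$ in $E'$, hence $\langle y'_n,y\rangle = 0$ for all $n$ by linear independence, and since $y$ is arbitrary $y'_0 = \dots = y'_N = 0$. For \textbf{(P3)}: surjectivity of $\phi_N$ lets us choose $w_0,\dots,w_N\in E$ with $\langle x'_n,w_k\rangle = \delta_{nk}$; then for any $(z_0,\dots,z_N)\in\prod_{n=0}^N F$ the element $u = \sum_{k=0}^N w_k\otimes z_k\in E\otimes F$ satisfies $\langle T_n,u\rangle = z_n$, so the map in \textbf{(P3)} is onto.

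The crux is \textbf{(P2)}, and this is where the topologies differ. Let $\mathcal{A}\subset(E\widehat{\otimes}_t F)'$ be equicontinuous. If $t = \pi$: a basis of neighborhoods of $0$ in $E\otimes_\pi F$ is given by the sets $\overline{\Gamma}(U\otimes V)$ with $U, V$ neighborhoods of $0$, so $\mathcal{A}\subseteq\{B : |B(x,y)|\le 1\text{ for all }x\in U,\ y\in V\}$ for some such $U, V$. Whenever $\sum_{n=0}^N x'_n\otimes y'_n\in\mathcal{A}$, then for each fixed $y\in V$ the functional $\sum_{n=0}^N\langle y'_n,y\rangle x'_n$ lies in the equicontinuous set $U^\circ\subset E'$, so the scalar \textbf{(P2)} furnishes a $\nu$, depending only on $\mathcal{A}$, with $\langle y'_n,y\rangle = 0$ for $\nu<n\le N$ as soon as $N>\nu$; since $V$ is absorbing, $y'_{\nu+1} = \dots = y'_N = 0$. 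If $t = \varepsilon$: because $\varepsilon\le\pi$, the canonical inclusion $(E\widehat{\otimes}_\varepsilon F)'\hookrightarrow(E\widehat{\otimes}_\pi F)'$ carries equicontinuous sets to equicontinuous sets, so $\mathcal{A}$ is $\pi$-equicontinuous and the previous paragraph applies. If $t = \beta$: now equicontinuous subsets of the dual need not be $\pi$-equicontinuous, so we argue directly. Write $\mathcal{A}\subseteq W^\circ$ for a neighborhood $W$ of $0$ in $E\widehat{\otimes}_\beta F$. Since the $\beta$-topology is, by definition, one for which the canonical bilinear map $E\times F\to E\otimes_\beta F$ is hypocontinuous, the given bounded total set $B_0\subset F$ yields a neighborhood $U_0$ of $0$ in $E$ with $U_0\otimes B_0\subset W$, i.e.\ $|B(x,y)|\le 1$ for all $B\in\mathcal{A}$, $x\in U_0$, $y\in B_0$. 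Repeating the argument above with $U_0$ and $B_0$ in place of $U$ and $V$, we obtain $\nu$ with $\langle y'_n,y\rangle = 0$ for $\nu<n\le N$ and every $y\in B_0$; this time totality of $B_0$ (in place of absorbency of $V$) gives $y'_{\nu+1} = \dots = y'_N = 0$. This verifies \textbf{(P2)} in all cases.

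I expect the main obstacle to be the $\beta$-case of \textbf{(P2)}: one has to unwind the definition of the $\beta$-topology to extract from the equicontinuity of $\mathcal{A}$ the neighborhood $U_0$ of $0$ in $E$ attached to the bounded set $B_0$ of $F$ --- this is precisely the hypocontinuity encoded in $\beta$ --- and to notice that totality of $B_0$ is exactly what replaces absorbency in the final step. The other ingredients (the shape of the $\pi$- and $\varepsilon$-equicontinuous sets, the identification of $T_n^t$, and the splitting used for \textbf{(P3)}) are routine.
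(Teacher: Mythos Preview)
Your proof is correct and follows essentially the same route as the paper's: reduce to Theorem \ref{vector-valued}, identify $y'\circ T_n$ with $x'_n\otimes y'$, dispatch \textbf{(P1)} and \textbf{(P3)} via linear independence and biorthogonal vectors, and prove \textbf{(P2)} by pulling back the equicontinuity along the canonical bilinear map. The only cosmetic difference is that the paper treats $t=\varepsilon$ and $t=\pi$ uniformly (noting that in both cases $E\times F\to E\otimes_t F$ is jointly continuous, so one may test against $(U\otimes V)^\circ$), whereas you do $\pi$ first and then reduce $\varepsilon$ to $\pi$ via $\varepsilon\le\pi$; both arguments are equivalent.
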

\begin{proof} It suffices to show that $(x'_n \widehat{\otimes}_t \operatorname*{Id}_F)_n$ satisfies condition \textbf{(P1)}, \textbf{(P2)}, and \textbf{(P3)} of Theorem \ref{vector-valued} for $t=\varepsilon,\pi,$ or $\beta$, under the corresponding hypotheses. Notice that 
$$
y' \circ (x' \widehat{\otimes}_t \operatorname{Id}_F) = x' \widehat{\otimes}_t y', \qquad x' \in E',\  y' \in F'.
$$
We simultaneously show that \textbf{(P1)} and \textbf{(P3)} must hold, a case distinction is only necessary in the proof of \textbf{(P2)}.

\textbf{(P1)}: Let $y'_0, \ldots, y'_N \in F'$ be such that
$$
\sum_{n = 0}^N  x'_n \widehat{\otimes}_t y'_n = 0.
$$ 
Since the set $\{x'_n \, : \, n \in \N\}$ is linearly independent, we obtain that $y'_n = 0$ for all $n=0, \ldots, N$. 

\textbf{(P3)}: Let $y_0, \ldots, y_N\in F$ be arbitrary.  Since the set of vectors $\{x'_n \, : \, n \in \N \}$ is linearly independent, there are elements $x_0, \ldots, x_N \in E$ such that 
$$
\langle x'_n, x_k \rangle = \delta_{k,n}, \qquad k,n= 0, \ldots N,
$$ 
where $\delta_{k,n}$ is the Kronecker delta. Hence,
$$
\langle x'_n \widehat{\otimes}_t \operatorname{Id}_F , \sum_{k = 0}^N x_k \otimes y_k \rangle = \sum_{k = 0}^N \langle x'_n, x_k \rangle y_k  = y_n, \qquad n= 0, \ldots , N.
$$

\textbf{(P2)}: We now treat the cases $(i)$ and $(ii)$ separately.

$(i)$ Let $t=\varepsilon$ or $\pi$. Since the canonical bilinear mapping $E \times F \rightarrow E \otimes_t F$ is jointly continuous, it suffices to show that for arbitrary open neighborhoods of the origin $U$ and $V$ in $E$ and $F$, respectively, there is $\nu  \in \N$ such that for all $N > \nu$ and $y'_0, \ldots, y'_N \in F$ it holds that
\begin{equation}
\sum_{n = 0}^N  x'_n \widehat{\otimes}_t y'_n \in (U \otimes V)^\circ
\label{P2-000}
\end{equation}
implies $y'_{\nu + 1} = \ldots =  y'_N = 0$. Let $\nu \in \N$ be such that for all $N > \nu$ and $b_0, \ldots, b_N \in \C$ 
\begin{equation}
\sum_{n = 0}^N b_n x'_n  \in U^\circ
\label{P2-111}
\end{equation}
implies $b_{\nu + 1} = \ldots b_N = 0$. Now suppose that $\eqref{P2-000}$ holds. We have 
$$
\sum_{n = 0}^N  \langle y'_n, y \rangle x'_n   \in U^\circ 
$$
for all $y \in V$. Property \eqref{P2-111} and the fact that $V$ is absorbing therefore imply that $y'_{\nu + 1} = \ldots =  y'_{N} = 0$.

$(ii)$ We now consider $t=\beta$. Let $B$ be a bounded total set in $F$. Since the canonical bilinear mapping $E \times F \rightarrow E \otimes_\beta F$ is hypocontinuous, it suffices to show that for an arbitrary open neighborhood of the origin $U$ in $E$ there is $\nu  \in \N$ such that for all $N > \nu$ and $y'_0, \ldots, y'_N \in F$ it holds that
\begin{equation}
\sum_{n = 0}^N  x'_n \widehat{\otimes}_\beta y'_n \in (U \otimes B)^\circ
\label{P2-0000}
\end{equation}
implies $y'_{\nu + 1} = \ldots =  y'_N = 0$. Let $\nu \in \N$ be such that for all $N > \nu$ and $b_0, \ldots, b_N \in \C$ it holds that
\begin{equation}
\sum_{n = 0}^N b_n x'_n  \in U^\circ
\label{P2-1111}
\end{equation}
implies $b_{\nu + 1} = \ldots b_N = 0$. Now suppose that $\eqref{P2-0000}$ holds. We have 
$$
\sum_{n = 0}^N  \langle y'_n, y \rangle x'_n   \in U^\circ 
$$
for all $y \in B$. Property \eqref{P2-1111} and the fact that $B$ is total therefore imply that $y'_{\nu + 1} = \ldots =  y'_{N} = 0$.
\end{proof}

\begin{corollary}\label{ass-vector-Frechet}
Let $E$ and $F$ be Fr\'echet spaces. For any Eidelheit sequence $(x'_n)_n \subset E'$ its $F$-associated sequence, with respect to both the $\varepsilon$- and $\pi$-topology, is also Eidelheit.
\end{corollary}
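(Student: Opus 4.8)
The plan is to deduce the corollary directly from Proposition \ref{ass-vector-valued}$(i)$, applied with $t = \varepsilon$ and with $t = \pi$. Since $E$ and $F$ are Fr\'echet spaces, they are complete, $F$ is barreled, and $E$ is $B$-complete (by the Krein--$\check{S}$mulian theorem, as recalled in Section 2). Thus all hypotheses of Proposition \ref{ass-vector-valued}$(i)$ are met except one: it remains to check that $E \widehat{\otimes}_t F$ is $B$-complete for $t = \varepsilon$ and $t = \pi$. For this it suffices to prove that $E \widehat{\otimes}_t F$ is a Fr\'echet space, because every Fr\'echet space is $B$-complete.

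First I would handle $t = \pi$, which is classical: if $(p_j)_j$ and $(q_j)_j$ are increasing fundamental systems of seminorms for $E$ and $F$, then the projective tensor seminorms $p_j \otimes q_j$, $j \in \N$, form a fundamental system of seminorms for $E \otimes_\pi F$. Hence $E \otimes_\pi F$ is metrizable, and its completion $E \widehat{\otimes}_\pi F$, being metrizable and complete, is a Fr\'echet space.

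Next I would treat $t = \varepsilon$. Recall that the $\varepsilon$-topology on $E \otimes F \cong B(E'_\sigma, F'_\sigma)$ is the topology of uniform convergence on the sets $A \times B$ with $A \subset E'$, $B \subset F'$ equicontinuous. The key observation is that, since $E$ and $F$ are metrizable, a countable basis $(U_j)_j$ of neighborhoods of the origin in $E$ (resp.\ $(V_j)_j$ in $F$) has the property that the polars $(U_j^\circ)_j$ (resp.\ $(V_j^\circ)_j$) form a cofinal family among the equicontinuous subsets of $E'$ (resp.\ $F'$). Therefore the $\varepsilon$-topology on $E \otimes F$ is generated by the countable family of seminorms $u \mapsto \sup_{x' \in U_j^\circ,\, y' \in V_j^\circ} |u(x', y')|$, $j \in \N$, so $E \otimes_\varepsilon F$ is metrizable and $E \widehat{\otimes}_\varepsilon F$ is again a Fr\'echet space.

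With $E \widehat{\otimes}_t F$ thus known to be $B$-complete for $t = \varepsilon, \pi$, Proposition \ref{ass-vector-valued}$(i)$ immediately yields that the $F$-associated sequence of every Eidelheit sequence $(x'_n)_n \subset E'$ is Eidelheit, in both topologies. I expect no genuine obstacle here: the only point requiring a moment of care is the metrizability of $E \otimes_\varepsilon F$, which rests on the standard fact that the dual of a metrizable l.c.s.\ admits a countable cofinal family of equicontinuous sets.
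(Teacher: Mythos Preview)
Your proposal is correct and follows exactly the route intended by the paper: the corollary is an immediate consequence of Proposition~\ref{ass-vector-valued}$(i)$ once one notes that Fr\'echet spaces are $B$-complete, barreled, and complete, and that $E\widehat{\otimes}_\varepsilon F$ and $E\widehat{\otimes}_\pi F$ are again Fr\'echet (hence $B$-complete). Your additional verification that $E\otimes_\varepsilon F$ and $E\otimes_\pi F$ are metrizable is standard and correctly argued.
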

\begin{remark}\label{introductoryremark}
For the $\pi$-topology, Corollary \ref{ass-vector-Frechet} can also be shown in the following alternative way: We need to show that the mapping 
$$
\Lambda_F : E \widehat{\otimes}_\pi F \rightarrow F^\N: \Phi \rightarrow (\langle x'_n \widehat{\otimes}_\pi \operatorname{Id}_F, \Phi \rangle)_n
$$ 
is surjective. Notice that $\Lambda_F = \Lambda \widehat{\otimes}_\pi \operatorname*{Id}_F$, where 
$$
\Lambda : E  \rightarrow \C^\N: x \rightarrow (\langle x'_n, x \rangle)_n\: .
$$ 
The surjectivity of $\Lambda_F$ then immediately follows from the ensuing well known fact \cite{Treves}: Given two surjective continuous linear mappings $T_1: E_1 \rightarrow F_1$ and $T_2: E_2 \rightarrow F_2$ between Fr\'echet spaces, the mapping 
$$
T_1 \widehat{\otimes}_\pi T_2: E_1 \widehat{\otimes}_\pi E_2 \rightarrow F_1 \widehat{\otimes}_\pi F_2
$$
is also surjective.
\end{remark}
\begin{remark}
Since $B$-completeness is hard to verify except when implied by formally stronger properties, in practice only Corollary \ref{ass-vector-Frechet} seems to be directly applicable. However, for many classical Eidelheit type problems (e.g.\ the Borel problem), the $(DF)$-spaces $F$ for which the associated $F$-valued problem is solvable have been characterized by dual $(DN)$-$(\Omega)$ type conditions; see \cite{Vogt-77} for the Borel problem, \cite{Braun-87} for holomorphic interpolation, and \cite{Bonet-02} for real analytic interpolation. On the other hand, Proposition \ref{ass-vector-valued} is a useful device for proving non B-completeness, as we show in the rest of the article.
\end{remark}
Next, we are interested in giving sufficient conditions on $E$ and $F$ such that the $F$-associated sequence of any sequence in $E'$ is \emph{not} Eidelheit. The idea is to find conditions on $E$ and $F$ ensuring that their completed tensor product is ``small''. Let us fix some terminology. A l.c.s.\ $F$ is called a {$(LF)$\emph{-space} (resp., $(LB)$\emph{-space}) if $F$ is the locally convex inductive limit of an inductive sequence $(F_n)_{n \in \N}$ of Fr\'echet spaces (resp., Banach spaces). The sequence $(F_n)_n$ is called \emph{strict} if $F_n$ is a topological subspace of  $F_{n+1}$ for each $n \in \N$. In such a case, we also say that $F$ is strict. It is well known that every strict $(LF)$-space $F$ is complete, \emph{regular}, i.e., every bounded set in $F$ is contained and bounded in some step $F_N$, and that, for every $n \in \N$, the induced topology of $F$ on $F_n$ coincides with the original topology on $F_n$. If $F_n \subsetneq F_{n+1}$ for each $n \in \N$ we say that $F$ is \emph{proper}. Finally, we will write $E'_A$ for the linear span of a subset $A \subseteq E'$.
\begin{lemma} \label{solvability}
Let $E$ be a Fr\'echet space. Then, $E$ is non-normable if and only if $E'$ contains an Eidelheit sequence.
\end{lemma}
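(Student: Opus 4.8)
The plan is to derive both implications from the description of Eidelheit sequences on a Fr\'echet (hence $B$-complete) space provided by Corollary \ref{scalar-valued}, i.e.\ from properties (P1)--(P2). For the ``only if'' direction, suppose $E$ is normable; being also a Fr\'echet space, $E$ is a Banach space, so the closed unit ball $A$ of $E'$ is equicontinuous. If some $(x'_n)_n\subset E'$ were Eidelheit, then by (P1) the functionals $x'_n$ would be linearly independent, hence nonzero, so $\operatorname{span}\{x'_n:n\in\N\}$ would be infinite-dimensional; but $x'_n/(2\|x'_n\|)\in\operatorname{span}\{x'_n:n\in\N\}\cap A$ for every $n$, whence this intersection is not contained in any finite-dimensional subspace, contradicting (P2). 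Therefore $E$ is non-normable.

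For the converse I would construct an Eidelheit sequence when $E$ is non-normable, verify (P1) and (P2), and then invoke Corollary \ref{scalar-valued}. Fix an increasing fundamental system of continuous seminorms $(p_k)_k$ on $E$ and set $U_k=\{x\in E:p_k(x)\le 1\}$. The only input about non-normability I would use is the following elementary remark: if $G$ is a non-normable Fr\'echet space and $q$ is a continuous seminorm on $G$, then $\{x\in G:q(x)\le 1\}$ contains a neighbourhood of $0$ and is therefore unbounded in $G$ (a locally convex space possessing a bounded neighbourhood of $0$ is normable), so some $\psi\in G'$ is unbounded on it. Now define $(x'_n)_n$ inductively. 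Suppose $x'_0,\dots,x'_{n-1}$ have been chosen, and put $M_n=\bigcap_{i=0}^{n-1}\ker x'_i$. This is a closed subspace of finite codimension, hence admits a finite-dimensional topological complement, so $M_n$ is a Fr\'echet space which is non-normable (otherwise $E$ would be normable). Applying the remark to $G=M_n$ and $q=p_n|_{M_n}$ yields some $\psi_n\in (M_n)'$ unbounded on $M_n\cap U_n$; extend $\psi_n$ to $x'_n\in E'$ by the Hahn--Banach theorem and fix witnesses $z_n^{(j)}\in M_n\cap U_n$ with $|\langle x'_n,z_n^{(j)}\rangle|\to\infty$ as $j\to\infty$. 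Note that $\langle x'_i,z_n^{(j)}\rangle=0$ for all $i<n$, since $z_n^{(j)}\in M_n\subseteq\ker x'_i$.

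It remains to check (P1)--(P2) for $(x'_n)_n$. If $\sum_{m=0}^N b_mx'_m=0$ with $b_N\ne 0$, then evaluating at $z_N^{(j)}$ gives $b_N\langle x'_N,z_N^{(j)}\rangle=0$ for every $j$, which is impossible; hence (P1) holds. For (P2) it suffices to consider the equicontinuous sets $U_k^\circ$, $k\in\N$ (any equicontinuous set lies in a scalar multiple of some $U_k^\circ$, and intersection with the subspace $\operatorname{span}\{x'_n:n\in\N\}$ commutes with scaling). Let $\phi=\sum_{m=0}^N b_mx'_m\in U_k^\circ$ be nonzero with $b_N\ne 0$. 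If $N\ge k$, then $z_N^{(j)}\in M_N\cap U_N\subseteq U_k$, so $\langle\phi,z_N^{(j)}\rangle=b_N\langle x'_N,z_N^{(j)}\rangle$ has absolute value at most $1$ for all $j$, forcing $b_N=0$, a contradiction. Hence $\operatorname{span}\{x'_n:n\in\N\}\cap U_k^\circ\subseteq\operatorname{span}\{x'_0,\dots,x'_{k-1}\}$, a finite-dimensional space, which is (P2). By Corollary \ref{scalar-valued}, $(x'_n)_n$ is Eidelheit.

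The step requiring the most care is the inductive construction: one must ensure that the witnesses used to verify (P2) are simultaneously compatible with the linear independence in (P1) and with all previously selected functionals. This is precisely what is engineered by forcing the witnesses at stage $n$ into $M_n=\bigcap_{i<n}\ker x'_i$ and by running the verification of (P1)--(P2) ``from the top coefficient downward''. Granting this scheme, the remaining ingredients --- Hahn--Banach extension and the fact that a neighbourhood of $0$ in a non-normable locally convex space is unbounded --- are routine.
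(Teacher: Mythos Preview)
Your proof is correct and, like the paper, reduces both directions to Corollary~\ref{scalar-valued}. The converse (normable $\Rightarrow$ no Eidelheit sequence) is argued identically. For the construction of an Eidelheit sequence when $E$ is non-normable, the paper takes a slightly shorter, non-inductive route: rather than building $x'_n$ to vanish on $M_n=\bigcap_{i<n}\ker x'_i$ via Hahn--Banach and carrying explicit witnesses, it observes that the linear spans $E'_{U^\circ_k}:=\operatorname{span}U^\circ_k$ of the polars must form a strictly increasing chain after passing to a subsequence, since $E'_{U^\circ_N}=E'$ for some $N$ would mean every continuous functional is bounded on $U_N$, making $U_N$ weakly bounded (hence bounded) and $E$ normable. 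Choosing $x'_n\in E'_{U^\circ_{k_n}}\setminus E'_{U^\circ_{k_{n-1}}}$ then yields (P1)--(P2) directly from this stratification. Your inductive construction with the witnesses $z_n^{(j)}\in M_n\cap U_n$ achieves the same end with a bit more bookkeeping; what it buys is that the verification of (P1)--(P2) becomes entirely self-contained and does not require the reader to unwind the bipolar argument.
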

\begin{proof} Suppose $E$ is non-normable.
Let $(U_n)_{n \in \N}$ be a fundamental system of neighborhoods of the origin in $E$ consisting of closed convex balanced sets such that $U_{n+1} \subseteq U_{n}$ for each $n \in \N$. The non-normability of $E$ and the bipolar theorem imply that there is a sequence of natural numbers $(k_n)_n$ such that $E'_{U^\circ_{k_n}} \subsetneq E'_{U^\circ_{k_{n+1}}}$ for each $n \in \N$. Choose $x'_0 \in E'_{U^\circ_{k_0}}$, $x'_0 \neq 0$, and $x'_n \in E'_{U^\circ_{k_n}} \backslash E'_{U^\circ_{k_{n-1}}}$, $n \in \Z_+$. Clearly, the sequence $(x'_n)_n$ satisfies the conditions of Corollary \ref{scalar-valued}. Conversely, no sequence in a Banach space can be Eidelheit, as follows from Corollary \ref{scalar-valued}.
\end{proof}

\begin{proposition}\label{non-solvable}\mbox{}
\begin{itemize}
\item[$(i)$] Let $E$ be a complete semi-reflexive nuclear l.c.s.\ that admits a continuous norm and let $F = \varinjlim_{n} F_n$ be a proper strict $(LF)$-space. Then, for any sequence in $E'$ its $F$-associated sequence, with respect to both the $\varepsilon$- and $\pi$-topology, is not Eidelheit.
\item[$(ii)$] Let $G$ be a complete semi-reflexive nuclear l.c.s., $H$ a non-normable reflexive Fr\'echet space, and suppose that $G \widehat{\otimes}_\pi H$ is bornological. Set $E = G'_b$ and $F = H'_b$. Then, for any sequence in $E'$ its $F$-associated sequence, with respect to to $i$-topology, is not Eidelheit.
\end{itemize}
\end{proposition}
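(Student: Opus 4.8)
The plan is, in both parts, to show directly that for \emph{every} sequence $(x'_n)_n\subset E'$ the mapping
$\Lambda_F\colon E\widehat\otimes_t F\to F^{\N}$, $\Phi\mapsto(\langle x'_n\widehat\otimes_t\operatorname{Id}_F,\Phi\rangle)_n$,
fails to be surjective; by the very definition of an Eidelheit sequence (applied to the l.c.s.\ $E\widehat\otimes_t F$ and the operators $x'_n\widehat\otimes_t\operatorname{Id}_F\in L(E\widehat\otimes_t F,F)$; cf.\ Remark \ref{introductoryremark}) this is exactly the assertion to be proved. The common mechanism is that $E\widehat\otimes_t F$ is so small that each of its elements is ``concentrated'' on one member of a suitable exhausting chain of proper subspaces of $F$, whereas an appropriately ``escaping'' sequence $(y_n)_n\in F^{\N}$ is not.

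For $(i)$, nuclearity of $E$ gives $E\widehat\otimes_\varepsilon F=E\widehat\otimes_\pi F$, so it suffices to treat $t=\varepsilon$. The structural input — where all the hypotheses on $E$ are consumed, and which rests on Valdivia's theory of $\varepsilon$-products \cite{Valdivia-87} — is that, since $E$ is complete, semi-reflexive and nuclear and carries a continuous norm and $F=\varinjlim_n F_n$ is a strict $(LF)$-space, the space $E\widehat\otimes_\varepsilon F$ is the increasing union of its subspaces $E\widehat\otimes_\varepsilon F_n$ (recall each $F_n$ is a closed topological subspace of $F$). The continuous norm cannot be dropped here, as one sees by taking $E=\C^{\N}$ with $(x'_n)_n$ the coordinate functionals, for which $E\widehat\otimes_\varepsilon F=F^{\N}$ and $\Lambda_F=\operatorname{Id}$. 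Granting the structural fact, any $\Phi\in E\widehat\otimes_\varepsilon F$ lies in some $E\widehat\otimes_\varepsilon F_k$, so that $\langle x'_n\widehat\otimes_\varepsilon\operatorname{Id}_F,\Phi\rangle\in F_k$ for all $n$, i.e.\ $\Lambda_F(\Phi)\in(F_k)^{\N}$. As $F$ is proper we may choose $y_n\in F_{n+1}\setminus F_n$; then $(y_n)_n$ lies in no $(F_k)^{\N}$, hence not in $\Lambda_F(E\widehat\otimes_\varepsilon F)$.

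For $(ii)$ the analogous structural input is a duality identification. Since $G$ is nuclear we have $G\widehat\otimes_\pi H=G\widehat\otimes_\varepsilon H$, and since $G\widehat\otimes_\pi H$ is bornological — together with the reflexivity of $G$ and $H$, which gives $(G'_b)'=G$ and $(H'_b)'=H$ — the space $E\widehat\otimes_i F=G'_b\widehat\otimes_i H'_b$ is identified with the space $\mathcal{B}(G,H)$ of continuous bilinear forms on $G\times H$, i.e.\ with $(G\widehat\otimes_\pi H)'_b$. Under this identification a generic $\Phi$ is a bilinear form $b$ bounded on $U\times V$ for some neighbourhoods $U,V$ of the origin in $G,H$, while $x'_n\widehat\otimes_i\operatorname{Id}_F$ acts as the partial evaluation $b\mapsto b(x'_n,\cdot)$ (with $x'_n\in E'=(G'_b)'=G$); hence $\langle x'_n\widehat\otimes_i\operatorname{Id}_F,\Phi\rangle\in H'_{V^{\circ}}$ for all $n$, i.e.\ $\Lambda_F(\Phi)\in(H'_{V^{\circ}})^{\N}$. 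Since $H$ is non-normable, the argument in the proof of Lemma \ref{solvability} supplies a decreasing fundamental sequence $(V_n)_n$ of neighbourhoods of the origin in $H$ with $H'_{V_n^{\circ}}\subsetneq H'_{V_{n+1}^{\circ}}$; choosing $y_n\in H'_{V_{n+1}^{\circ}}\setminus H'_{V_n^{\circ}}$ yields a sequence in $F^{\N}$ that lies in no $(H'_{V^{\circ}})^{\N}$, hence not in $\Lambda_F(E\widehat\otimes_i F)$.

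The elementary ``escaping'' construction at the end of each part is routine; the real work, and the main obstacle, is the structural identification that makes it bite. In $(i)$ this is Valdivia's fact that the completed $\varepsilon$-tensor product of such an $E$ with a strict $(LF)$-space is the countable increasing union of the tensor products with the steps — precisely the point where completeness, semi-reflexivity, nuclearity and the continuous norm of $E$ enter (and where the statement fails without the norm). In $(ii)$ it is the identification of $G'_b\widehat\otimes_i H'_b$ with $(G\widehat\otimes_\pi H)'_b$ together with the ensuing reduction of an arbitrary element to a bilinear form bounded on a product of neighbourhoods; here the bornologicity of $G\widehat\otimes_\pi H$ is indispensable, since it is what forces the completion $G'_b\widehat\otimes_i H'_b$ to remain inside the space of continuous bilinear forms on $G\times H$.
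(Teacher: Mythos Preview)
Your argument is correct and, in outline, the same as the paper's: in both parts one identifies $E\widehat\otimes_t F$ with a concrete space and shows that every element is ``trapped'' in a fixed step of an increasing exhaustion of $F$, so an escaping sequence cannot be hit. Part $(ii)$ matches the paper essentially verbatim: the paper also invokes the Grothendieck identification $G'_b\widehat\otimes_i H'_b\cong B(G,H)$, uses joint continuity to get $h'_n\in\bigcup_{\lambda>0}\lambda V^\circ$, and phrases the contradiction via property (P2) for an Eidelheit sequence in $H'$ rather than via an explicit escaping sequence---but this is the same argument.

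The one point worth flagging is in part $(i)$. You black-box the key structural fact that $E\widehat\otimes_\varepsilon F=\bigcup_n E\widehat\otimes_\varepsilon F_n$, attributing it to \cite{Valdivia-87}. The paper does not cite this; instead it proves the needed statement directly and transparently in three lines, and this is precisely where every hypothesis on $E$ is used. Namely, since $E$ is complete and nuclear one has $E\widehat\otimes F\cong L(E'_b,F)$ (so $x'_n\widehat\otimes\operatorname{Id}_F(\Phi)=\Phi(x'_n)$); semi-reflexivity together with the continuous norm gives a bounded total set $B\subset E'_b$; for any $\Phi\in L(E'_b,F)$ the image $\Phi(B)$ is bounded, hence contained in some $F_N$ by regularity of the strict $(LF)$-space; and closedness of $F_N$ in $F$ then forces $\Phi(E')=\Phi(\overline{E'_B})\subseteq\overline{F_N}=F_N$. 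This is exactly the content of your ``structural input'', but self-contained and with a precise accounting of which hypothesis does what. Your citation to \cite{Valdivia-87} for this tensor-product decomposition is at best imprecise; replacing it by the short direct argument removes any doubt.
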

\begin{proof} $(i)$ Since $E$ is nuclear, there is no distinction between the $\varepsilon$- and $\pi$-topology and we simply write $E \widehat{\otimes} F \cong E \widehat{\otimes}_\varepsilon F \cong E \widehat{\otimes}_\pi F$. Notice that, since $E$ is complete and nuclear, and $F$ complete, we have that $E \widehat{\otimes} F \cong L(E'_b, F)$. Let $(x'_n)_n \subset E'$ be arbitrary. Choose $y_0 \in F_0$ and $y_n \in F_n \backslash F_{n-1}$, $n \in \Z_+$, and suppose that there is $\Phi \in L(E'_b,F)$ such that
\begin{equation}
y_n = x'_n \widehat{\otimes} \operatorname{Id}_F(\Phi) = \Phi(x'_n), \qquad n \in \N.
\label{contradiction}
\end{equation}
The fact that $E$ is semi-reflexive and admits a continuous norm implies that $E'_b$ contains a bounded total set $B$. As strict inductive limits are regular, we have that $\Phi(B) \subset F_N$ for some $N \in \N$. Since $F_N$ is closed in $F$, we obtain that 
$$
\Phi(E') = \Phi\left(\overline{E'_B}\right) \subseteq \overline{\Phi (E'_B)}^F \subseteq \overline{F_N}^F = F_N.
$$
In view of the choice of the sequence $(y_n)_n$ this contradicts \eqref{contradiction}.

$(ii)$ By \cite[Chap.\ II, Corollaire, p.\ 90]{Grothendieck} we have that 
$$
G'_b \widehat{\otimes}_i H'_b \cong (G \widehat{\otimes}_\pi H)' \cong B(G,H),
$$
where $B(G,H)$ denotes the space of jointly continuous bilinear forms on $G \times H$. Let $(g_n)_n \subset E' = G$ be arbitrary. Lemma \ref{solvability} implies that $F = H'$ contains an Eidelheit sequence $(h'_n)_n$. Suppose that there is $\Phi \in B(G,H)$ such that
$$
h'_n = g_n \widehat{\otimes}_i \operatorname{Id}_{H'}(\Phi) = \Phi(g_n,\cdot), \qquad n \in \N.
$$
Since the mapping $\Phi$ is jointly continuous, there are neighborhoods of the origin $U$ and $V$ of $E$ and $F$, respectively, such that 
$$
|\Phi(g,h)| \leq 1, \qquad g \in U, h \in V.
$$
Hence,
$$
h'_n = \Phi(g_n,\cdot) \in \bigcup_{\lambda > 0} \lambda V^\circ, \qquad n \in \N. 
$$
Since we have chosen $(h'_n)_n$ to be Eidelheit, this contradicts property $(P2)$ of Corollary \ref{scalar-valued}.
\end{proof}
\section{Application: Non $B$-completeness of completed tensor products}
In this section we give sufficient conditions on a Fr\'echet space $E$ and a reflexive $(DF)$-space $F$ for $E \widehat{\otimes}_t F$, with $t = \varepsilon, \pi, \beta$, or $i$, not to be  $B$-complete. The results are based on the work of Valdivia \cite{Valdivia-87} and our results from Section \ref{associated}. We mention that the question whether the $\beta$-completed tensor product of two $B$-complete spaces is again $B$-complete was first raised by Summers \cite{Summers}.
\begin{theorem} \label{counterexample} Let $E$ and $F$ be locally convex spaces.\ 
\begin{itemize}
\item[$(i)$] The spaces $E \widehat{\otimes}_\varepsilon F$ and $E \widehat{\otimes}_\pi F$ are not $B$-complete in the following cases:
\begin{itemize}
\item[$(a)$] $E$ is an infinite-dimensional nuclear Fr\'echet space that admits a continuous norm and $F$ is a reflexive proper strict $(LB)$-space. 
\item[$(b)$] $E$ is a Fr\'echet space which does not admit a continuous norm and $F$ is an infinite-dimensional Montel $(DF)$-space such that $F \not \cong \C^{(\N)}$. 
\end{itemize}
\item[$(ii)$] The spaces $E \widehat{\otimes}_\beta F$ and  $E \widehat{\otimes}_i F$ are not $B$-complete in the following cases:
\begin{itemize}
\item[$(a)$] $E$ is an infinite-dimensional nuclear Fr\'echet space, $F$ is a non-normable reflexive $(DF)$-space that contains a bounded total set, and $E'_b \widehat{\otimes}_\pi F'_b$ is bornological. 
\item[$(b)$] $E$ is an infinite-dimensional Fr\'echet-Montel space such that $E \not \cong \C^{\N}$ and $F$ is a reflexive $(DF)$-space that does not contain a bounded total set. 
\end{itemize}
\end{itemize}
On the other hand, in all four cases, the spaces $E$ and $F$ are $B$-complete.
\end{theorem}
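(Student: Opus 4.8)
The plan is a proof by contradiction, converting $B$-completeness of the tensor product into the solvability of an associated vector-valued Eidelheit problem that Proposition \ref{non-solvable} forbids, or (in the two ``degenerate'' cases) invoking a result of Valdivia \cite{Valdivia-87}. I would begin with the last assertion, which is immediate: in every case $E$ is a Fr\'echet space, hence $B$-complete, while $F$ is a reflexive $(DF)$-space --- in $(i)(a)$ because a strict $(LB)$-space is a $(DF)$-space and reflexivity is assumed, in $(i)(b)$ because a Montel $(DF)$-space is reflexive, and in $(ii)$ by hypothesis --- and therefore $B$-complete by \cite[Chap.\ 3, Prop.\ 17.6]{Horvath}.

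For $(i)(a)$: since $E$ is infinite-dimensional and nuclear it is non-normable, so by Lemma \ref{solvability} the dual $E'$ carries an Eidelheit sequence $(x'_n)_n$. If $E \widehat{\otimes}_t F$, $t = \varepsilon$ or $\pi$, were $B$-complete, then --- $E$ being $B$-complete and $F$ barreled and complete, being a strict $(LB)$-space --- Proposition \ref{ass-vector-valued}$(i)$ would force $(x'_n \widehat{\otimes}_t \operatorname{Id}_F)_n$ to be Eidelheit; but $E$ is complete, semi-reflexive, nuclear and admits a continuous norm, while $F$ is a proper strict $(LF)$-space, so Proposition \ref{non-solvable}$(i)$ forbids this. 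For $(ii)(a)$ I would first observe that, $E$ and $F$ both being barreled, every separately continuous bilinear mapping on $E \times F$ is hypocontinuous (Banach--Steinhaus), so the $i$- and $\beta$-topologies agree on $E \otimes F$ and $E \widehat{\otimes}_i F = E \widehat{\otimes}_\beta F$; it thus suffices to treat $\widehat{\otimes}_\beta$. Again $E$ is non-normable, so $E'$ carries an Eidelheit sequence $(x'_n)_n$, and if $E \widehat{\otimes}_\beta F$ were $B$-complete then, $F$ being barreled, complete, and containing a bounded total set, Proposition \ref{ass-vector-valued}$(ii)$ would make $(x'_n \widehat{\otimes}_\beta \operatorname{Id}_F)_n = (x'_n \widehat{\otimes}_i \operatorname{Id}_F)_n$ Eidelheit. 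On the other hand, with $G := E'_b$ and $H := F'_b$ one checks that $G$ is complete, semi-reflexive and nuclear (the strong dual of a nuclear Fr\'echet--Montel space), $H$ is a non-normable reflexive Fr\'echet space (the strong dual of the non-normable reflexive $(DF)$-space $F$), $G \widehat{\otimes}_\pi H = E'_b \widehat{\otimes}_\pi F'_b$ is bornological by hypothesis, and $G'_b = E$, $H'_b = F$ by reflexivity; hence Proposition \ref{non-solvable}$(ii)$ says no $F$-associated sequence of a sequence in $E'$ is Eidelheit for the $i$-topology. Either way, a contradiction.

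For $(i)(b)$ and $(ii)(b)$, where neither factor has the nuclear-with-continuous-norm structure required by Proposition \ref{non-solvable}, the plan is a structural reduction plus Valdivia's work. In $(i)(b)$, a Fr\'echet space without a continuous norm has a complemented subspace isomorphic to $\C^\N$ (extend the coordinate functionals by Hahn--Banach), so $E \cong \C^\N \times E_1$; since $\C^\N$ is nuclear, $\C^\N \widehat{\otimes}_t F \cong \C^\N \widehat{\otimes}_\varepsilon F \cong F^\N$ for $t = \varepsilon, \pi$, so $E \widehat{\otimes}_t F$ contains $F^\N$ as a complemented subspace. Complemented subspaces of $B$-complete spaces are $B$-complete (being quotients, by \cite[Chap.\ 3, Prop.\ 17.2]{Horvath}), so $B$-completeness of $E \widehat{\otimes}_t F$ would force $F^\N$ to be $B$-complete --- impossible for an infinite-dimensional Montel $(DF)$-space $F$ with $F \not\cong \C^{(\N)}$, by \cite{Valdivia-87}. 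In $(ii)(b)$, dually, $F$ has no bounded total set exactly when $F'_b$ has no continuous norm, so $\C^\N$ is complemented in the Fr\'echet space $F'_b$, and, taking strong duals, $\C^{(\N)} \cong (\C^\N)'_b$ is complemented in $F = (F'_b)'_b$; writing $F \cong \C^{(\N)} \times F_1$ and again using barreledness of $E$ and $F$ to get $E \widehat{\otimes}_\beta F = E \widehat{\otimes}_i F$, this space contains the complemented subspace $E \widehat{\otimes}_\beta \C^{(\N)} \cong E^{(\N)}$ (the locally convex direct sum), whose $B$-completeness would follow --- contradicting \cite{Valdivia-87} for the infinite-dimensional Fr\'echet--Montel space $E$ with $E \not\cong \C^\N$.

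The routine part is the locally convex bookkeeping: the coincidence $i = \beta$ for barreled factors, the commutation of $\widehat{\otimes}_\varepsilon$ with countable products and of $\widehat{\otimes}_\pi$ (hence $\widehat{\otimes}_\beta$) with countable locally convex direct sums, the behaviour of strong duals under finite products, and checking that the hypotheses of Propositions \ref{ass-vector-valued} and \ref{non-solvable} really do hold after the identifications $G'_b = E$, $H'_b = F$. The step where I expect to need the most care is the reduction in $(i)(b)$ and $(ii)(b)$ together with the precise form of Valdivia's non-$B$-completeness theorems for the countable product $F^\N$ and the countable direct sum $E^{(\N)}$ --- this is the part ``based on the work of Valdivia.''
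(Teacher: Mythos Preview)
Your proposal is correct and follows essentially the same approach as the paper: the contradiction via Propositions \ref{ass-vector-valued} and \ref{non-solvable} in cases $(i)(a)$ and $(ii)(a)$, and the reduction to Valdivia's Proposition \ref{Valdivia-B-complete} via a complemented copy of $\C^\N$ (resp.\ $\C^{(\N)}$) in cases $(i)(b)$ and $(ii)(b)$. The only cosmetic differences are that the paper cites Grothendieck's lemmas \cite[Chap.\ II, Lemmes 10.1 and 10.2]{Grothendieck} directly for the complemented copies rather than deriving the second one by duality, and uses that \emph{closed} subspaces of $B$-complete spaces are $B$-complete rather than your (equally valid) quotient argument for complemented subspaces.
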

In the proof of Theorem \ref{counterexample} we shall employ the following result of Valdivia. Given a l.c.s.\ $E$ we employ the notation
 $$
 E^\N = \prod_{n \in \N} E, \qquad E^{(\N)} = \bigoplus_{n \in \N} E.
 $$
\begin{proposition}\cite[Thm.\ 6]{Valdivia-87} \label{Valdivia-B-complete} Let $E$ be an infinite-dimensional Fr\'echet-Montel space. The following facts are equivalent:
\begin{itemize}
\item[$(i)$] $E^{(\N)}$ is $B$-complete,
\item[$(ii)$] $(E'_b)^{\N}$ is $B$-complete,
\item[$(iii)$] $E \cong \C^\N$.
\end{itemize}
\end{proposition}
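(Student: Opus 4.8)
The plan is to prove the two equivalences $(i)\Leftrightarrow(iii)$ and $(ii)\Leftrightarrow(iii)$ in parallel, using the strong duality between the spaces involved. Since a Fréchet–Montel space is reflexive, $E^{(\N)}=\bigoplus_n E$ and $(E'_b)^\N=\prod_n E'_b$ are strong duals of one another, with $(E^{(\N)})'_b=\prod_n E'_b$ and $((E'_b)^\N)'_b\cong\bigoplus_n E$. First I would record the dictionary for $(iii)$: an infinite-dimensional Fréchet space is isomorphic to $\C^\N$ exactly when it has a fundamental system of seminorms with finite-dimensional quotients, equivalently when $E'_b\cong\C^{(\N)}$ (here one realizes $E$ as a closed, hence isomorphic, subspace of $\C^\N$). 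Hence $E\not\cong\C^\N$ is equivalent to the existence of a continuous seminorm $p$ with $\dim(E/\ker p)=\infty$; as quotients of Fréchet–Montel spaces are Fréchet–Montel, $H:=E/\ker p$ is then an infinite-dimensional Fréchet–Montel space carrying a continuous norm. Throughout I would use that Fréchet spaces and reflexive $(DF)$-spaces are $B$-complete, that closed subspaces and quotients by closed subspaces of $B$-complete spaces are again $B$-complete \cite{Horvath}, and Pt\'ak's closedness criterion from Section~2.

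For $(iii)\Rightarrow(i)$ and $(iii)\Rightarrow(ii)$ it then suffices, after these reductions, to prove that the model spaces $\bigoplus_n\C^\N$ and its strong dual $\prod_n\C^{(\N)}$ are $B$-complete, and here I would verify the defining closedness condition by hand. The decisive point for $\bigoplus_n\C^\N$ is that its dual $\prod_n\C^{(\N)}$ has, in each coordinate, only finite-dimensional equicontinuous pieces: a neighbourhood of $\bigoplus_n\C^\N$ has the form $\Gamma(\bigcup_n U_n)$ with $U_n$ a neighbourhood of $\C^\N$, and each polar $U_n^\circ$ is a finite-dimensional bounded subset of $\C^{(\N)}$ — this is exactly where the finite-dimensional-quotient structure of $\C^\N$ enters. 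I would show that this ``finite-dimensional slice by slice'' structure forces a subspace meeting every equicontinuous set in a $\sigma$-closed set to be $\sigma$-closed, so that the Krein–Šmulian-type hypothesis upgrades to genuine closedness. The product $\prod_n\C^{(\N)}$ is handled by the dual structure: the equicontinuous subsets of its dual $\bigoplus_n\C^\N$ are finitely supported in $n$ with entries in bounded — hence, by the Montel property, relatively compact — subsets of $\C^\N$, and I would combine this finite-support-plus-compactness with a Krein–Šmulian argument to conclude.

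For the converse I would prove the contrapositive $\neg(iii)\Rightarrow\neg(i)$ and $\neg(iii)\Rightarrow\neg(ii)$ by reducing, via the permanence properties, to one model computation. With $H=E/\ker p$ as above, the canonical map $E^{(\N)}\to H^{(\N)}$ is a quotient by the closed subspace $(\ker p)^{(\N)}$, while $\prod_n H'_b$ is (topologically) a closed subspace of $\prod_n E'_b$; so it is enough to show that $\bigoplus_n H$ and $\prod_n H'_b$ are \emph{not} $B$-complete whenever $H$ is an infinite-dimensional Fréchet–Montel space with a continuous norm. To witness this I would construct a subspace $X$ of the dual that is $\sigma$-dense but not $\sigma$-closed while meeting every equicontinuous set in a $\sigma$-closed set. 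The construction is a diagonal/staircase one: because $H$ now has a continuous norm, the coordinate polars $U_n^\circ\subseteq H'$ are \emph{infinite}-dimensional, and I would choose functionals spread across infinitely many coordinates so that $X$ contains all finitely supported data but its weak$^*$ closure additionally contains a genuine diagonal functional not lying in $X$. That the traces $X\cap A$ stay closed would follow because each equicontinuous $A$ is, by the Montel property, relatively compact with weak$^*$-metrizable coordinate polars, so closedness can be checked coordinatewise.

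The hard part will be the witness construction in the last paragraph: one must arrange the staircase of functionals so that $X$ is simultaneously $\sigma$-dense, not $\sigma$-closed, and has $\sigma$-closed intersection with every equicontinuous set, and it is precisely the continuous norm on $H$ — equivalently, the infinite-dimensionality of the coordinate polars — that makes the diagonal obstruction available. The positive direction is also delicate, since the passage from ``closed on each equicontinuous set'' to ``closed'' is the whole content of $B$-completeness and genuinely fails for general coordinate spaces; the argument must really exploit the finite-dimensionality of the polars of $\C^\N$. I expect the interplay between the direct-sum topology, the product topology on the dual, and Montel compactness of equicontinuous sets to be the recurring technical crux.
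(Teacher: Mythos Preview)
The paper does not give its own proof of this proposition: it is stated with the citation \cite[Thm.\ 6]{Valdivia-87} and then used as a black box in the proof of Theorem~\ref{counterexample}. There is therefore nothing in the paper to compare your proposal against; any comparison would have to be with Valdivia's original argument, which the present paper does not reproduce.

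That said, one step in your proposal is incorrect as written. You assert that ``quotients of Fr\'echet--Montel spaces are Fr\'echet--Montel'' in order to conclude that $H:=E/\ker p$ is again Fr\'echet--Montel. This is false in general: there exist Fr\'echet--Montel (indeed Fr\'echet--Schwartz) K\"othe echelon spaces with a quotient isomorphic to $\ell^1$, which is not Montel. So your reduction to an infinite-dimensional Fr\'echet--Montel space with a continuous norm does not go through as stated. You may be able to repair this---for instance, equicontinuous subsets of the dual of \emph{any} Fr\'echet space are already relatively weak$^*$-compact by Alaoglu--Bourbaki and weak$^*$-metrizable on polars of neighbourhoods, so the Montel property of $H$ may not be essential to the witness construction---but the argument as written has a genuine gap at this point.
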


\begin{proof}[Proof of Theorem \ref{counterexample}]
$(i)$ Case $(a)$: It follows directly from Lemma \ref{solvability} and Propositions \ref{ass-vector-valued}$(i)$ and \ref{non-solvable}$(i)$.
Case $(b)$: Let $t = \varepsilon$ or $\pi$. By \cite[Chap.\ II, Lemme 10.2, p.\ 93]{Grothendieck} the space $E$ contains a complemented subspace that is isomorphic to $\C^\N$. Hence the space $E \widehat{\otimes}_t F$ contains a complemented subspace that is isomorphic to $\C^\N \widehat{\otimes}_t F$. As closed subspaces of $B$-complete spaces are $B$-complete \cite[Chap.\ 3, Prop.\ 17.4]{Horvath}, it suffices to show that $\C^\N \widehat{\otimes}_t F$ is not $B$-complete. Set $G = F'_b$, an infinite-dimensional Fr\'echet-Montel space such that $G \not \cong \C^{\N}$. Since,
$$
\C^\N \widehat{\otimes}_t F \cong F^{\N} \cong (G'_b)^\N
$$
as l.c.s., this follows from Proposition \ref{Valdivia-B-complete}.

$(ii)$ Since, in both cases, the spaces $E$ and $F$ are barreled, we have that $E \widehat{\otimes}_\beta F = E \widehat{\otimes}_i F$. Therefore we shall simply denote this space by $E \bar{\otimes} F$.
 Case $(a)$: It follows directly from Lemma \ref{solvability} and Propositions \ref{ass-vector-valued}$(ii)$ and \ref{non-solvable}$(ii)$.
Case $(b)$: By \cite[Chap.\ II, Lemme 10.1, p.\ 93]{Grothendieck} the space $F$ contains a complemented subspace that is isomorphic to $\C^{(\N)}$. Hence the space $E \bar{\otimes} F$ contains a complemented subspace that is isomorphic to $E \bar{\otimes} \C^{(\N)}$. As closed subspaces of $B$-complete spaces are $B$-complete, it suffices to show that $E \bar{\otimes} \C^{(\N)}$ is not $B$-complete. But
$$
E \bar{\otimes} \C^{(\N)} \cong E^{(\N)} 
$$
as l.c.s., so that the result follows from Proposition \ref{Valdivia-B-complete}.
\end{proof}
As an application of Theorem \ref{counterexample}, we now show that the space
$$
\mathcal{O}_C(\R^d) = \{ f \in C^{\infty}(\R^d) \, : \, \exists N \in \N \, \forall \alpha \in \N^d : \sup_{x \in \R^d}(1+|x|)^{-N} |f^{(\alpha)}(x)| < \infty \} 
$$ 
of very slowly increasing functions endowed with its canonical $(LF)$-topology is not B-complete. We denote by $s$ the space of rapidly decreasing sequences. 
\begin{proposition}
The space $\mathcal{O}_C(\R^d)$ is not $B$-complete.
\end{proposition}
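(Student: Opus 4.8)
The plan is to realize $\mathcal{O}_C(\R^d)$ as a completed tensor product falling under Theorem \ref{counterexample}$(ii)(a)$. Concretely, I would establish the topological isomorphism
$$
\mathcal{O}_C(\R^d) \cong s \widehat{\otimes}_\beta s',
$$
where $s$ is the space of rapidly decreasing sequences — an infinite-dimensional nuclear Fr\'echet space with a continuous norm, hence $B$-complete — and $s' = s'_b$ is its strong dual, a non-normable, reflexive (indeed nuclear) $(DF)$-space, hence also $B$-complete. Here $\beta = i$ on this tensor product because $s$ and $s'$ are barreled, and all the tensor topologies coincide since $s$ is nuclear. Granting the isomorphism, I would apply Theorem \ref{counterexample}$(ii)(a)$ with $E = s$ and $F = s'$: $E$ is an infinite-dimensional nuclear Fr\'echet space; $F$ is a non-normable reflexive $(DF)$-space; and the sequence of unit vectors $\{e_n : n \in \N\}$ is bounded in $s'_b$ and has dense linear span, so $F$ contains a bounded total set (in particular case $(ii)(b)$, which requires the opposite on $F$, does not apply, and one genuinely needs $(ii)(a)$). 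The remaining hypothesis is that $E'_b \widehat{\otimes}_\pi F'_b = s' \widehat{\otimes}_\pi s$ be bornological; since $s$ is nuclear, $s' \widehat{\otimes}_\pi s = s' \widehat{\otimes}_\varepsilon s$, and the latter is an $(LF)$-space — the $\varepsilon$-product of the Schwartz $(DF)$-space $s'$ with the Fr\'echet space $s$ commutes with the inductive limit defining $s'$ — hence bornological. Theorem \ref{counterexample}$(ii)(a)$ then yields that $s \widehat{\otimes}_\beta s'$, and therefore $\mathcal{O}_C(\R^d)$, is not $B$-complete; since both $s$ and $s'$ are $B$-complete, this is a genuine instance of the failure exhibited in Theorem \ref{counterexample}.

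The substance of the argument — and the step I expect to be the main obstacle — is the isomorphism $\mathcal{O}_C(\R^d) \cong s \widehat{\otimes}_\beta s'$. The natural route is a lattice decomposition of $\R^d$: fix a smooth partition of unity $1 = \sum_{k \in \Z^d} \varphi_k^2$ with $\varphi_k = \varphi_0(\cdot - k)$ and $\operatorname{supp}\varphi_0$ compact, and consider $R \colon f \mapsto (\varphi_k f)_k$ together with $E \colon (\eta_k)_k \mapsto \sum_k \varphi_k \eta_k$. Each $\varphi_k f$ lies in a fixed nuclear Fr\'echet space isomorphic to $s$, and the defining condition of $\mathcal{O}_C(\R^d)$ — a single $N$ with $|f^{(\alpha)}(x)| \lesssim (1+|x|)^N$ for all $\alpha$ — translates, using $1+|x| \sim 1+|k|$ on $\operatorname{supp}\varphi_k$ and the Leibniz rule, into $(\varphi_k f)_k$ being $O((1+|k|)^N)$ uniformly in the seminorms of $s$, i.e.\ into $R(f)$ lying, bounded in the corresponding step, in $s \widehat{\otimes}_\varepsilon s'$. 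Since $E R = \operatorname{Id}$ on $\mathcal{O}_C(\R^d)$, this exhibits $\mathcal{O}_C(\R^d)$ as a complemented subspace of $s \widehat{\otimes}_\beta s'$; upgrading this to a genuine isomorphism is the delicate point, and I expect it to follow either from a reference on sequence-space representations of $\mathcal{O}_C$ or from a Pe\l czy\'nski-type decomposition argument exploiting $s \widehat{\otimes} s \cong s$, $s' \widehat{\otimes} s' \cong s'$ and $\mathcal{O}_C(\R^{2d}) \cong \mathcal{O}_C(\R^d) \cong \mathcal{O}_C(\R^d) \widehat{\otimes} \mathcal{O}_C(\R^d)$.

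Everything else is routine bookkeeping: that $s$ is an infinite-dimensional nuclear Fr\'echet space with a continuous norm, that $s'_b$ is a non-normable reflexive nuclear $(DF)$-space containing the bounded total set $\{e_n : n \in \N\}$, and that $s' \widehat{\otimes}_\pi s$ is an $(LF)$-space. Once the isomorphism $\mathcal{O}_C(\R^d) \cong s \widehat{\otimes}_\beta s'$ is in hand, the conclusion is immediate from Theorem \ref{counterexample}$(ii)(a)$.
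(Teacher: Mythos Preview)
Your overall plan coincides with the paper's: identify $\mathcal{O}_C(\R^d)$ with $s \widehat{\otimes}_i s'$ and apply Theorem \ref{counterexample}$(ii)(a)$ with $E = s$, $F = s'$; your checks that $s'$ is a non-normable reflexive $(DF)$-space with bounded total set $\{e_n\}$ are fine. The paper, however, reaches the isomorphism far more cheaply than your lattice decomposition: it quotes Valdivia's $\mathcal{O}_M(\R^d) \cong s \widehat{\otimes}_\pi s'$, uses that the Fourier transform identifies $\mathcal{O}_C(\R^d)$ with $(\mathcal{O}_M)'_b$, and then applies Grothendieck's duality $(G \widehat{\otimes}_\pi H)'_b \cong G'_b \widehat{\otimes}_i H'_b$ (valid because $s \widehat{\otimes}_\pi s'$ is bornological) to obtain $\mathcal{O}_C(\R^d) \cong s' \widehat{\otimes}_i s$ in one line. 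Your Pe\l czy\'nski-type route may be made to work, but it is substantially heavier and you yourself flag it as the main obstacle.

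There are two genuine gaps in what you wrote. First, the claim that ``all the tensor topologies coincide since $s$ is nuclear'' is false: nuclearity gives only $\varepsilon = \pi$, barreledness gives $\beta = i$, but here $\pi \neq i$ --- indeed $s \widehat{\otimes}_\pi s' \cong \mathcal{O}_M$ while $s \widehat{\otimes}_i s' \cong \mathcal{O}_C$, and these are distinct spaces. Your lattice map lands naturally in an inductive limit $\varinjlim_N (s \widehat{\otimes}_\varepsilon \ell^\infty_{-N})$, which is \emph{not} $s \widehat{\otimes}_\varepsilon s'$; keeping the topologies straight is precisely the delicate part of the direct approach, and your sketch conflates them. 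Second, your justification that $s' \widehat{\otimes}_\pi s$ is bornological --- that $\widehat{\otimes}_\varepsilon$ commutes with the inductive limit defining $s'$, yielding an $(LF)$-space --- is not a formality but the hard step: such commutation fails in general, and proving it here is essentially equivalent to proving bornologicity. This is a known but non-trivial theorem (Grothendieck; modern proof in Larcher--Wengenroth), and the paper treats it as a citation rather than an exercise; you should do the same.
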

\begin{proof}
Valdivia \cite{Valdivia-81} has shown that 
$$
\mathcal{O}_M(\R^d) \cong s \widehat{\otimes}_\pi s',
$$
as l.c.s., where $\mathcal{O}_M(\R^d)$ denotes the space of slowly increasing functions. Since $\mathcal{O}'_M(\R^d) \cong \mathcal{O}_C(\R^d)$, as l.c.s., via the Fourier transform and the space $s \widehat{\otimes}_\pi s'$ is bornological \cite[Chap.\ II, Corollaire 2, p.\ 128]{Grothendieck} (see \cite{L-W} for a modern proof using homological algebra techniques), \cite[Chap.\ II, Corollaire, p.\ 90]{Grothendieck} implies that 
$$
\mathcal{O}_C(\R^d) \cong s' \widehat{\otimes}_i s.
$$
Hence $\mathcal{O}_C(\R^d)$ is not $B$-complete by Theorem \ref{counterexample}$(ii)(a)$.
\end{proof}
We end this article by giving an overview of the lack of $B$-completeness of various spaces occurring in the theory of (ultra)distributions. Concerning ultradistributions, we follow the Braun-Meise-Taylor approach \cite{B-M-T}. Since we shall employ sequential representations, we need to introduce power series spaces \cite[Chap.\ 29]{M-V}. Let $\alpha = (\alpha_n)_{n \in \N}$ be a sequence of positive real numbers tending monotonically increasing to infinity. For $h \in \R$ we set
$$
\Lambda_h^\alpha = \{ (c_n)_n \in \C^\N \, : \, |(c_n)_n|_h := \sup_{n \in \N} |c_n|e^{h\alpha_n} < \infty\}.
$$
Define the Fr\'echet spaces
$$
\Lambda_0(\alpha) = \varprojlim_{h \to 0^+}\Lambda^\alpha_{-h},  \qquad \Lambda_\infty(\alpha) = \varprojlim_{h \to \infty}\Lambda^\alpha_{h}.
$$
The space $\Lambda_0(\alpha)$ (resp.,\ $\Lambda_\infty(\alpha)$) is nuclear \cite[Prop.\ 29.6]{M-V} if 
$$
\lim_{n \to \infty} \frac{\log n}{\alpha_n} = 0 \qquad \left( \mbox{resp., }  \sup_{n \in \N} \frac{\log n}{\alpha_n} < \infty \right).
$$
Their strong duals are given by
$$
\Lambda'_0(\alpha) = \varinjlim_{h \to 0^+}\Lambda^\alpha_{h},  \qquad \Lambda'_\infty(\alpha) = \varinjlim_{h \to \infty}\Lambda^\alpha_{-h}.
$$
\begin{proposition} \label{distributionsandB} Let $\Omega \subseteq \R^d$ be an open connected set and let $\omega$ be a non-quasianalytic weight function  in the sense of \cite{B-M-T}. The spaces $\mathcal{D}(\Omega)$, $\mathcal{D}'(\Omega)$, $\mathcal{D}_{(\omega)}(\Omega)$, $\mathcal{D}'_{(\omega)}(\Omega)$, $\mathcal{E}_{\{\omega\}}(\Omega)$, and $\mathcal{E}'_{\{\omega\}}(\Omega)$ are not $B$-complete. 
\end{proposition}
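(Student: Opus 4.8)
The plan is to identify each of the six spaces, up to topological isomorphism, with a completed tensor product of the kind handled by Theorem \ref{counterexample}, by invoking the well-known sequence space representations of spaces of (ultra)distributions. By classical results of Valdivia one has $\mathcal{D}(\Omega)\cong s^{(\N)}$, and hence, on passing to strong duals (the strong dual of a countable locally convex direct sum of Fréchet spaces being the countable product of the strong duals), $\mathcal{D}'(\Omega)\cong (s'_b)^{\N}$. Likewise, in the Braun--Meise--Taylor framework \cite{B-M-T} there is a weight sequence $\alpha=(\alpha_n)_n$ (depending on $\omega$), with $\alpha_n\to\infty$, such that
\begin{gather*}
\mathcal{D}_{(\omega)}(\Omega)\cong\Lambda_\infty(\alpha)^{(\N)},\qquad \mathcal{E}'_{\{\omega\}}(\Omega)\cong\Lambda_0(\alpha)^{(\N)},\\
\mathcal{D}'_{(\omega)}(\Omega)\cong\big(\Lambda'_\infty(\alpha)\big)^{\N},\qquad \mathcal{E}_{\{\omega\}}(\Omega)\cong\big(\Lambda'_0(\alpha)\big)^{\N},
\end{gather*}
the two bottom lines again following by duality; for the last one one also uses that $\mathcal{E}_{\{\omega\}}(\Omega)$ is reflexive, so that it is the strong dual of $\mathcal{E}'_{\{\omega\}}(\Omega)$. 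In each of the six cases write $\Lambda$ for the relevant power series space. Then, since $\alpha_n\to\infty$, $\Lambda$ is an infinite-dimensional Fréchet--Montel space admitting a continuous norm (for instance the weight norm $|\cdot|_h$); in particular $\Lambda\not\cong\C^{\N}$, and $\Lambda'_b$ is an infinite-dimensional Montel $(DF)$-space with $\Lambda'_b\not\cong\C^{(\N)}$ (otherwise $\Lambda\cong\C^{\N}$).

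Granting these representations, Theorem \ref{counterexample} finishes the proof at once. The three spaces $\mathcal{D}(\Omega)$, $\mathcal{D}_{(\omega)}(\Omega)$ and $\mathcal{E}'_{\{\omega\}}(\Omega)$ are of the form $\Lambda^{(\N)}$, and $\Lambda^{(\N)}\cong\Lambda\,\widehat{\otimes}_\beta\,\C^{(\N)}$ (as in the proof of Theorem \ref{counterexample}$(ii)(b)$); since $\C^{(\N)}$ is a reflexive $(DF)$-space in which every bounded set lies in a finite-dimensional subspace, it contains no bounded total set, so Theorem \ref{counterexample}$(ii)(b)$---applied with $E=\Lambda$ and $F=\C^{(\N)}$---shows that $\Lambda^{(\N)}$ is not $B$-complete. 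The three spaces $\mathcal{D}'(\Omega)$, $\mathcal{D}'_{(\omega)}(\Omega)$ and $\mathcal{E}_{\{\omega\}}(\Omega)$ are of the form $(\Lambda'_b)^{\N}$, and $(\Lambda'_b)^{\N}\cong\C^{\N}\,\widehat{\otimes}_\varepsilon\,\Lambda'_b$ (as in the proof of Theorem \ref{counterexample}$(i)(b)$); since $\C^{\N}$ is a Fréchet space admitting no continuous norm, Theorem \ref{counterexample}$(i)(b)$---applied with $E=\C^{\N}$ and $F=\Lambda'_b$---shows that $(\Lambda'_b)^{\N}$ is not $B$-complete. (Alternatively, once the representations are available, one may appeal directly to Valdivia's Proposition \ref{Valdivia-B-complete}.)

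The whole substance of the argument therefore lies in the first step, the sequence space representations for an arbitrary open connected $\Omega$. This is where non-quasianalyticity of $\omega$ is essential---it is what guarantees compactly supported test functions in the classes, hence partitions of unity and linear extension operators---and where Valdivia's structure theory for $\mathcal{D}(\Omega)$-type spaces enters; in the Roumieu case the additional subtlety is that $\mathcal{E}_{\{\omega\}}(\Omega)$ is not a Fréchet space, so one must reach it through the compactly supported dual $\mathcal{E}'_{\{\omega\}}(\Omega)$ and the reflexivity of $\mathcal{E}_{\{\omega\}}(\Omega)$. Once the representations are in hand, the rest is a two-line application of Theorem \ref{counterexample}.
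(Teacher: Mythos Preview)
Your proposal is correct and follows essentially the same route as the paper: invoke the Valdivia--Vogt sequence space representations (which the paper cites as \cite{Valdivia-78, Vogt-83a}, with the explicit choice $\alpha_n=\omega(n^{1/d})$) and then apply Theorem~\ref{counterexample}. The only cosmetic difference is that the paper records the representations directly as completed tensor products $\Lambda\,\widehat{\otimes}_i\,\C^{(\N)}$ and $\Lambda'\,\widehat{\otimes}_\pi\,\C^{\N}$, whereas you first write $\Lambda^{(\N)}$ and $(\Lambda'_b)^{\N}$ and then convert; your parenthetical remark that one could bypass Theorem~\ref{counterexample} and appeal straight to Proposition~\ref{Valdivia-B-complete} is exactly what the proof of cases $(i)(b)$ and $(ii)(b)$ of that theorem does anyway.
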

\begin{proof} Set $\alpha_n = \omega(n^{1/d})$, $n \in \N$. The assertion is a direct consequence of Theorem \ref{counterexample} and the ensuing sequential representations due to Valdivia and Vogt \cite{Valdivia-78, Vogt-83a}:
$$
\mathcal{D}(\Omega) \cong s \widehat{\otimes}_i \C^{(\N)}, \qquad \mathcal{D}'(\Omega) \cong s' \widehat{\otimes}_\pi \C^{\N},
$$
$$
\mathcal{D}_{(\omega)}(\Omega) \cong \Lambda_\infty(\alpha) \widehat{\otimes}_i \C^{(\N)}, \qquad \mathcal{D}'_{(\omega)}(\Omega) \cong \Lambda'_\infty(\alpha) \widehat{\otimes}_\pi \C^{\N}, 
$$
$$
\mathcal{E}_{\{\omega\}}(\Omega) \cong \Lambda'_0(\alpha) \widehat{\otimes}_\pi \C^{\N}, \qquad \mathcal{E}'_{\{\omega\}}(\Omega) \cong \Lambda_0(\alpha) \widehat{\otimes}_i \C^{(\N)}. 
$$
\end{proof}

\begin{remark}
The question whether the spaces $\mathcal{D}(\Omega)$ and $\mathcal{D}'(\Omega)$ are $B$-complete was first posed by Raikov and attracted the attention of many authors \cite{H-H, Smolyanov-69, Smolyanov-71, Valdivia-74, Valdivia-77}. The non $B$-completeness of $\mathcal{D}_{(\omega)}(\Omega)$ and $\mathcal{D}'_{(\omega)}(\Omega)$ is due to Valdivia \cite{Valdivia-87}. Furthermore, the lack of $B$-completeness of $\mathcal{E}_{\{\omega\}}(\Omega)$ and $\mathcal{E}'_{\{\omega\}}(\Omega)$ follows directly from their sequential representations and Proposition \ref{Valdivia-B-complete}, and is therefore implicitly due to Valdivia and Vogt. However, the fact that the space $\mathcal{O}_C(\R^d)$ is not $B$-complete seems to be new and does not follow from Valdivia's work.
\end{remark}

\end{document}